\newtheorem{thm}{Theorem}[section]
\newtheorem{lemma}[thm]{Lemma}
\newtheorem{prop}[thm]{Proposition}
\newtheorem{cor}[thm]{Corollary}
\newtheorem{defn}[thm]{Definition}
\newtheorem{examp}[thm]{Example}
\newtheorem{rmk}[thm]{Remark}
\newcommand{\N}{\mathbb{N}}
\newcommand{\mR}{\mathbb{R}}
\newcommand{\var}{\mbox{var}}
\newcommand{\diam}{\mbox{diam}}
\begin{document}

\title{ Erd\H{o}s-R\'enyi laws for exponentially and polynomially mixing dynamical systems.}

\author{Nicolai Haydn and Matthew Nicol
\thanks{Department of Mathematics,
University of Southern California; Department of Mathematics, University of Houston.
 E-mail: $<$nhaydn@usc.edu$>$,$<$nicol@math.uh.edu$>$. 
 MN would like to thank the  NSF for  support on NSF-DMS Grant 2009923. }
 }

\date{\today}

\maketitle

\begin{abstract}
Erd\H{o}s-R\'enyi limit laws give the  length scale of a time-window over which time-averages in Birkhoff sums have a
non-trivial almost-sure limit. We establish Erd\H{o}s-R\'enyi type limit laws for H\"older observables on dynamical systems modeled
by Young Towers with exponential   and polynomial  tails. This extends earlier results on Erd\H{o}s-R\'enyi limit laws 
to a broad class of dynamical systems with some degree of hyperbolicity. 
\end{abstract}

\section{Introduction}\label{sec:intro}

The Erd\H{o}s-R\'enyi  fluctuation law gives the length scale of a time-window over which time-averages in Birkhoff sums have a non-trivial almost-sure limit. 
It  was first proved in the independent and 
identically distributed  (i.i.d.) case~\cite{Erd}  in the following form:

\begin{prop}\label{ER} Let $(X_n)_{n\ge 1}$ be an i.i.d.\  sequence of non-degenerate random variables, 
$\mathbb{E}[X_1]=0$, and
 let $S_n=X_1+\cdots+X_n$. Assume that the moment generating function $\phi(t)= \mathbb{E}(e^{tX_1})$ exists in some open  interval $U\subset \mR$  containing $t=0$. 
For each $\alpha>0$, define $\psi_\alpha(t)= \phi(t) e^{-\alpha t}$. For those $\alpha$ for which $\psi_\alpha$ attains its minimum at a point $t_\alpha\in U$, let $c_{\alpha}=\alpha t_{\alpha} -\ln \phi (t_{\alpha})$. Then

\[
 \lim_{n\to\infty} \max\{ (S_{j+[\ln  n /c_{\alpha}}-S_j)/[\ln  n/c_{\alpha}]: 1\le j\le n-[\ln  n/c_{\alpha}]\}=\alpha
 \]

\end{prop}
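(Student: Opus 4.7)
The plan is to prove this classical result by combining sharp large deviation asymptotics (in the spirit of Cramér and Bahadur--Rao) with a Borel--Cantelli argument for the upper bound and a disjoint-block independence argument for the lower bound. Write $k_n = [\ln n / c_\alpha]$, so that $e^{-c_\alpha k_n}$ is asymptotically of order $1/n$. The key identification is that the Legendre transform $c_\alpha = \sup_t (\alpha t - \ln \phi(t))$ achieved at $t_\alpha$ is exactly the rate governing $\mathbb{P}(S_k/k \geq \alpha)$, and that $\alpha \mapsto c_\alpha$ is strictly increasing on the relevant range.

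\textbf{Upper bound.} First I would show that for every $\varepsilon > 0$, almost surely eventually no window of length $k_n$ starting in $[1, n - k_n]$ has average exceeding $\alpha + \varepsilon$. By Chernoff's inequality,
\[
\mathbb{P}\left( S_{j+k_n} - S_j \geq (\alpha+\varepsilon) k_n \right) \leq e^{-c_{\alpha+\varepsilon} k_n}.
\]
A union bound over the at most $n$ starting indices gives a failure probability of order $n \cdot e^{-c_{\alpha+\varepsilon} k_n} = n^{1 - c_{\alpha+\varepsilon}/c_\alpha + o(1)}$, and since $c_{\alpha+\varepsilon} > c_\alpha$, this is summable along a suitable polynomial subsequence $n_\ell = \lceil \ell^{\beta} \rceil$ with $\beta$ large enough. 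Monotonicity of the maximum between subsequence points handles the gaps, and Borel--Cantelli finishes this half.

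\textbf{Lower bound.} For the matching lower bound, I need to show that some window achieves an average close to $\alpha$. Here I exploit independence by splitting $[1,n]$ into $\lfloor n/k_n \rfloor$ \emph{disjoint} blocks of length $k_n$, whose sums are i.i.d. The sharp large deviation lower bound (Cramér's theorem) gives
\[
\mathbb{P}\left( S_{k_n} \geq (\alpha-\varepsilon) k_n \right) \geq e^{-c_{\alpha-\varepsilon} k_n - o(k_n)},
\]
which is of order $n^{-c_{\alpha-\varepsilon}/c_\alpha + o(1)}$ with exponent strictly less than $1$. The probability that \emph{every} one of the $\lfloor n/k_n \rfloor$ disjoint blocks fails is therefore at most
\[
\left(1 - n^{-c_{\alpha-\varepsilon}/c_\alpha + o(1)}\right)^{n/k_n} \leq \exp\!\left( - n^{1 - c_{\alpha-\varepsilon}/c_\alpha + o(1)} / k_n \right),
\]
which decays stretched-exponentially and is summable, so Borel--Cantelli again gives the almost sure lower bound.

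\textbf{Main obstacle.} The delicate point is the \emph{sharpness} of the large deviation rate: the Chernoff upper bound is elementary, but the matching lower bound $\mathbb{P}(S_k \geq \alpha k) \geq e^{-c_\alpha k - o(k)}$ requires the full Cramér theorem via an exponential tilt $d\mathbb{P}_{t_\alpha}/d\mathbb{P} = e^{t_\alpha X_1}/\phi(t_\alpha)$, under which the tilted mean equals $\alpha$ and the central limit theorem produces the subexponential correction. This uses precisely the hypothesis that $t_\alpha$ lies in the interior $U$ where $\phi$ is finite, which is what allows differentiability of $\ln \phi$ at $t_\alpha$ and the identity $\alpha = (\ln \phi)'(t_\alpha)$. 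Once the tilt is set up, everything else is routine bookkeeping.
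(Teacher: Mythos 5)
Your proposal is correct and follows exactly the classical Erd\H{o}s--R\'enyi route that the paper itself codifies in Proposition~\ref{prop:erdos1}: Chernoff bound plus a union bound over the $\le n$ starting indices and a polynomial subsequence for Borel--Cantelli on the upper side, and a sharp Cram\'er lower bound combined with independence of disjoint length-$k_n$ blocks on the lower side. The only cosmetic difference is that the paper abstracts the lower-bound mechanism into the hypothesis~\eqref{tau} of Proposition~\ref{prop:erdos1} (so that it can be verified by decay-of-correlations estimates in the dynamical setting), whereas in the i.i.d.\ case you verify it directly by observing that the $\lfloor n/k_n\rfloor$ disjoint block sums are i.i.d., which is precisely how~\eqref{tau} with $\kappa=1$ arises there.
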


The existence of $\psi_{\alpha} (t)$
for all $t \in U$ implies exponential large deviations with a rate function (in fact $c_{\alpha}=I(\alpha)$ where
$I$ is the rate function, defined later) and  this implies that sampling over a window length $k(n)$ of larger than logarithmic length scale (in the sense that $k(n)/\ln  n \to \infty$), allows  the ergodic theorem to kick in and 
\[
 \lim_{n\to\infty} \max\{ (S_{j+k(n)}-S_j)/k(n): 1\le j\le n-k(n)\}=0
 \]
while sampling over too small a window, for example  $k(n)=1$, gives similarly a trivial limit
\[
 \lim_{n\to\infty} \max\{ (S_{j+k(n)}-S_j)/k(n): 1\le j\le n-k(n)\}=\|X_1\|_{\infty}
 \]

Define the function
$$
\theta (n,k(n)):= \max_{0\le j\le n-k(n)}\frac{S_{j+k(n)}-S_j}{k(n)},
$$
which  may be interpreted as the maximal average gain over a time window of length
 $k(n)$ up to time $n$.
In the  setting of coin tosses  
the Erd\H{o}s-R\'enyi law gives precise 
information on the maximal average gain of  a player in a fair game  in the case where the length of the time window
ensures  $\lim_{n\to \infty} \theta (n,k(n))$ has a non-degenerate almost sure  limit. 




In 1986 Deheuvels, Devroye and Lynch~\cite{Deh}  in the i.i.d.\ setting of Proposition~\ref{ER} gave a precise rate of convergence and showed that if
$k(n)=[\ln n/c_{\alpha}]$ then $P$~a.s:
\[
\limsup\frac{[\theta (n,k(n))-\alpha k(n)]}{\ln k(n)}=\frac{1}{2t_{\alpha}}
\]
and 
\[
\liminf\frac{[\theta (n,k(n))-\alpha k(n)]}{\ln k(n)}=-\frac{1}{2t_{\alpha}}
\]

In this paper we  establish Erd\H{o}s-R\'enyi limit laws  for H\"older observables on dynamical systems
modeled by  Young Towers~\cite{LY98,LY99} with exponential and 
polynomial tails. Tails refer to the measure $\mu( R>n)$ of the return time $R$  function to the base of  the tower.  Our exposition 
is based upon~\cite[Section 2.3]{KKM}  and~\cite{Melbourne_Varandas} who present a framework more general than that of the original Tower construction of Young~\cite{LY98} in that uniform 
contraction of local stable manifolds is not assumed for polynomially mixing systems in dimensions greater than $1$. We will give more details on Young Towers below but here note that H\"older observables on Young Towers with exponential (polynomial) tails
have exponential (polynomial) decay of correlations, the precise rate is encoded in the return time function.

Our results extends the work
of~\cite{Nic} from the class of non-uniformly expanding maps with exponential decay of correlations to all systems modeled by 
a Young Tower, including Sinai dispersing billiard maps; diffeomorphisms of Hen\'{o}n type; polynomially
mixing billards as in~\cite{Chernov_Zhang2} (as long as the correlation decay rate is greater than $n^{-\beta}$, $\beta>1$);
smooth unimodal and multimodal maps satisfying the Collet-Eckmann conditions~\cite[Example 4.10]{KKM};
certain Viana maps~\cite[Example 4.11]{KKM};  and Lorenz-like maps. Other examples to which our results apply
are listed in~\cite{Melbourne_Varandas}.

 In the setting of hyperbolic dynamical systems there are many earlier results.  
 Grigull~\cite{Gri} established the Erd\H{o}s-Renyi law for hyperbolic rational maps, Chazottes and Collet~\cite{Col}  proved Erd\H{o}s-Renyi theorems with rates for uniformly expanding maps of the interval, while
 Denker and Kabluchko~\cite{Den1} proved Erd\H{o}s-Renyi results for Gibbs-Markov dynamics. In~\cite{Denker}
   Erd\H{o}s-R\'enyi limit laws for Lipschitz observations on  a class of non-uniformly expanding dynamical systems, including logistic-like maps, were given as well as related results on maximal averages of a time series arising from  H\"older observations
on intermittent-type maps
over a time window of polynomial
 length.    Kifer~\cite{Kifer1,Kifer2} has established Erd\H{o}s-R\'enyi laws for non-conventional ergodic sums and in the setting of averaging or homogenization of chaotic dynamical systems. We mention also recent related work of~\cite{Chen1,Chen2} on  applications of Erd\H{o}-Reny\'i limit laws to multifractal analysis.

 The main novelty of out technique is the use of the symbolic metric  on the axiomatic Young Tower construction of~\cite{Melbourne_Varandas, KKM} to control the  norm of the indicator function of sets of the form $(S_n > n\alpha)$ on the quotiented tower. This eliminates many difficulties involved with considering the Lipschitz 
 norm of such sets with respect to the Riemannian metric on  the phase space of the system. The structure allows
 us to consider, with small error, averaged Birkhoff sums as being constant on stable manifolds, and thence
 use the decay of correlations for observables on  the quotiented tower in terms of their Lipschitz and $L^{\infty}$ norms. 
 
 Our results in the case of Young Towers with exponential decay of correlations, Theorem~\ref{main}, are optimal and replicate the i.i.d case, while in the case of Young Towers with polynomial tails
 we investigate windows of polynomial length and give close to optimal upper and lower bounds, Theorem~\ref{poly1} and Theorem~\ref{poly2}.

\section{Young Towers.}\label{sec-NUH}

We now describe more precisely what we mean by a non-uniformly hyperbolic dynamical system modeled by a Young Tower.  Our exposition 
is based upon~\cite[Section 2.3]{KKM}  and~\cite{Melbourne_Varandas} who present a framework more general than that of the original Tower of Young~\cite{LY98} in that uniform 
contraction of local stable manifolds is not assumed for polynomially mixing systems in dimensions greater than $1$. This set-up is very useful for the study of almost sure fluctuations of
Birkhoff sums of bounded variables.

We suppose $T$ is a diffeomorphism of a Riemannian manifold $(M,d)$, possibly with singularities. Fix a subset $\Lambda\subset M$ with a 
`product structure'. Product structure means there exists a family of disjoint stable disks (sometimes called local stable manifolds)  $\{W^s\}$ that cover $\Lambda$ as well as a family
of disjoint unstable disks (sometimes called local unstable manifolds)  $\{W^u\}$ that cover $\Lambda$. The stable and unstable 
disks containing $x\in \Lambda$ are denoted $W^s (x)$ and $W^u (x)$. Each stable disk intersects each unstable disk in precisely one point. 

Suppose there is a partition $\{\Lambda_j\}$ of $\Lambda$ such that each  stable disk $W^s (x)$  lies in $\Lambda_j$ if $x\in \Lambda_j$.
Suppose there exists  a `return time' integer-valued function $R:\Lambda \to \N$, constant with value $R(j)$
on each partition element $\Lambda_j$, such that $T^{R(j)}(W^s (x)) \subset W^s (T^{R(j)} x)$ for all $x\in \Lambda_j$. We assume that
the greatest common denominator of the integers  $\{ R(j)\}$ is $1$, which ensures that the Tower is mixing.   We define the induced return
map $f: \Lambda \to \Lambda$ by $f(x)=T^{R(x)} (x)$. 

For $x,y \in \Lambda$ let $s(x,y)$ be the least integer $n\ge 0$ such that $f^n (x)$ and $f^n (y)$ lie in different partition elements of $\Lambda$. We call
$s$ the separation time with respect to the map $f: \Lambda \to \Lambda$. 

\noindent {\bf Assumptions:}   there exist constants $K\ge 1$ and $0<\beta_1<1$ such that 

(a) if $z \in W^s (x)$ then $d(f^n z, f^n x) \le K \beta_1^n$;

(b)  if $z\in W^u (x)$ then  $d(f^n z, f^n x)\le K \beta_1^{s(x,z)-n}$;

(c) if $z,x\in \Lambda$ then $d(T^j z,T^j x) \le K (d(z,x)+d(fz, fx))$ for all $0\le j \le \min \{ R(z), R(x)\}$. 

Define an equivalence relation on $\Lambda$ by $z\sim x$ if $z\in W^s (x)$ and form the quotient space  $\overline{\Lambda}=\Lambda/\sim$
with corresponding partition $\{ \overline{\Lambda_j}\}$. The return time function $R: \overline{\Lambda}\to \N $ is well-defined as 
each  stable disk $W^s (x)$  lies in $\Lambda_j$ if $x\in \Lambda_j$ and $T^{R(j)}(W^s (x)) \subset W^s (T^{R(j)} x)$ for all $x\in \Lambda_j$.
So we have a well-defined induced map $\bar{f}: \overline{\Lambda} \to \overline{\Lambda}$. Suppose that $\bar{f}$ and the partition $\{\overline{\Lambda_j}\}$ 
separates points in $\overline{\Lambda}$. 
 Define $d_{\beta_1} (z,x)=\beta_1^{s(z,x)}$, then $d_{\beta_1}$  is
a metric on $\overline{\Lambda}$.

 Let $m$ be a reference probability measure on $\overline{\Lambda}$ (in most applications this will be normalized Lebesgue measure).
 Assume that $\bar{f}:\overline{\Lambda}\to \overline{\Lambda}$ is a Gibbs-Markov uniformly expanding on $(\overline{\Lambda}, d_{\beta_1})$. By this we mean that 
$\bar{f}$ is a measure-theoretic bijection from each $\overline{\Lambda_j}$ onto 
$\overline{\Lambda}$. 


 We assume that $\bar{f}: \overline{\Lambda} \to \overline{\Lambda}$ has an invariant probability measure $\overline{\nu}$ and $0<a<\frac{d\bar{\nu}}{dm} < b$ for some constants $a,b$. We 
assume that $R$ is $\overline{\nu}$-integrable and there is an $f$ invariant probability $\nu$  measure on $\Lambda$ such that $\overline{\pi}^{*} \nu=\overline{\nu}$ where
$\overline{\pi}$ is the quotient map taking $\Lambda$ onto $\Lambda/\sim$.  Now we define the Young Tower
\[
\Delta =\{ x,j)\in \Lambda \times \N: 0\le j \le R(x)-1\}
\]
and  the tower map $F$ by
\[ 
F(x,j)= \left\{ \begin{array}{ll}
         (x,j+1) & \mbox{if $j < R(x)-1$};\\
        (fx,0)& \mbox{if $j=R(x)-1$}.\end{array} \right. 
\] 
and lift $\nu$ in a standard way  to  an invariant probability  measure $\nu_{\Delta}$ for $F: \Delta \to \Delta$. In fact $\nu_{\Delta}=\nu \times \mbox{counting measure}$.

Define the semi-conjugacy $\pi : \Delta \to M$, 
$\pi (x,j)=T^j (x)$. The measure $\mu=\pi^{*} \nu_{\Delta}$ is a $T$-invariant mixing probability measure on $M$. Given  an observable $\varphi: M \to \mathbb{R}$ we may lift to
an observable $\varphi: \Delta \to \mathbb{R}$  by defining $\varphi (x,j)=\varphi (T^j x)$ (we keep 
the same notation for the observable).   The semi-conjugacy $\pi^{*}$ allows us to transfer statistical properties from 
lifted observables $\varphi$ on $(\Delta, F, \nu_{\Delta} )$ to the original observables $\varphi$ on $(T, M, \mu)$.

\section{Large deviations and rate functions.}
      
   Before stating precisely our main result we recall the definition of rate function and some other notions of large deviations theory.    
  Suppose $(T, M,\mu)$ is  a probability preserving transformation and $\varphi: M \to \mathbb R$ is a mean-zero integrable function i.e.\
$\int_M \varphi~d\mu =0$. 
Throughout this paper we will write $S_n (\varphi ): =\varphi +\varphi\circ T +\ldots + \varphi \circ T^{n-1} $
for the $n$th ergodic sum of $\varphi$. Sometimes we will write $S_n$ instead
of $S_n(\varphi)$ for simplicity of notation or when $\varphi$ is clear from context.

\begin{defn}  A mean-zero integrable function $\varphi: M \to \mathbb R$  is said to satisfy a large deviation principle with rate function $I(\alpha)$, if there exists a non-empty neighborhood $U$ of $0$ and a strictly convex function $I:U\to \mathbb R$, non-negative and vanishing only at $\alpha=0$, such that
\begin{eqnarray}\label{rate+}
\lim_{n\to\infty} \frac 1n\log \mu (S_n (\varphi)  \ge n \alpha)& =& -I(\alpha)
\end{eqnarray}
for all $\alpha>0$ in $U$ and
\begin{eqnarray}\label{rate-}
 \lim_{n\to\infty} \frac 1n\log \mu (S_n (\varphi)  \le n \alpha) &=& -I(\alpha)
 \end{eqnarray}
for all $\alpha<0$ in $U$.
\end{defn}

In the literature this is referred to as a first level or local (near the average) large deviations principle.

For  H\"older observables on Young Towers with exponential tails (which are not $L^1$ coboundaries in the sense that 
$\varphi\not = \psi\circ T-\psi$
   for any $\psi\in L^1 (\mu)$) such an exponential large deviations result holds
with rate function $I_{\varphi} (\alpha)$~\cite{Nic,Reybellet_Young,Mel,Pollicott_Sharp}.  A formula  for the width
of $U$ is given in \cite{Reybellet_Young} following a standard approach but it is not useful
in concrete estimates. 

\section{Erd\H{o}s-R\'enyi laws: background.}\label{erdoslaw}

Proposition~\ref{prop:erdos1} given below  is found in  a proof from Erd\H{o}s and R\'enyi~\cite{Erd} (see  \cite[Theorem 2.4.3]{Csorgo_Revesz}, Grigull~\cite{Gri} Denker and  Kabluchko~\cite{Den1} or~\cite{Denker} where this method has been used). 
The Gauss bracket $[.]$ denotes the integer part of  a number. 
 Throughout the proofs of this paper we will concentrate on the case $\alpha>0$
as the case $\alpha<0$ is identical with the obvious modifications of statements.

\begin{prop}\label{prop:erdos1}
Let $(T,M,\mu)$ be an ergodic dynamical system and $\varphi: M \to \mathbb R$ is an observable.

(a) Suppose that $\varphi$ satisfies a large deviation principle with rate  function $I$ defined on the open set $U$ and assume $\mu(\varphi)=0$ Let $\alpha >0$, $\alpha \in U$ and set 
$$ L_n=L_n(\alpha)=\left[\frac{\ln n}{I(\alpha)}\right]\qquad n\in\mathbb N.$$
Then the upper Erd\H{o}s-R\'enyi law holds, that is, for $\mu$ a.e. $x\in X$ 
$$
 \limsup_{n\to\infty} \max_{0\le j\le n-L_n}\frac1{L_n}S_{L_n} (\varphi) \circ T^j (x)\le \alpha.
 $$

\noindent (b) If  for some constant $C>0$ and integer $\kappa\ge 0$ for each interval $A$
\begin{eqnarray}\label{tau}
 \mu\!\left(\bigcap_{m=0}^{n-L_n}\{S_{L_n} (\varphi) \circ T^m\in A\}\right)&\le &C [\mu (S_{L_n}\in A)]^{n/(L_n)^{\kappa}}
 \end{eqnarray}
then the lower Erd\H{o}s-R\'enyi law holds as well, that is, for $\mu$  a.e. $x\in X$ 
$$ 
\liminf_{n\to\infty} \max_{0\le j\le n-L_n}\frac1{L_n}S_{L_n} (\varphi) \circ T^j \ge \alpha.
$$
\end{prop}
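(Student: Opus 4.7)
The plan is to prove both halves via Borel--Cantelli arguments built on the large deviation principle for $\varphi$, with the upper bound (a) needing only a union bound and the lower bound (b) exploiting the joint-measure estimate (\ref{tau}).

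For part (a), fix any $\alpha' > \alpha$ with $\alpha' \in U$. By a union bound,
\[
\mu\!\left(\max_{0 \le j \le n-L_n} \frac{S_{L_n}\circ T^j}{L_n} \ge \alpha'\right) \le (n-L_n+1)\,\mu(S_{L_n} \ge L_n \alpha').
\]
The LDP gives $\mu(S_{L_n} \ge L_n \alpha') \le e^{-L_n I(\alpha')(1-o(1))}$, and strict convexity of $I$ with $I(0)=0$ ensures $I(\alpha') > I(\alpha)$. Since $L_n \sim \ln n / I(\alpha)$, the right-hand side is bounded by $n^{1 - I(\alpha')/I(\alpha) + o(1)}$, which is $n^{-\delta}$ for some $\delta>0$. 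To make the bound summable I would pass to the geometric subsequence $n_k = 2^k$, apply Borel--Cantelli there, and then interpolate to arbitrary $n$ using the monotonicity in $n$ of the maximum and the fact that $L_n$ varies slowly between $n_k$ and $n_{k+1}$. Finally, letting $\alpha'$ decrease to $\alpha$ along a countable sequence in $U$ yields the claim almost surely.

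For part (b), fix $\alpha' \in (0,\alpha)$ and apply (\ref{tau}) to the interval $A = (-\infty, L_n \alpha']$, giving
\[
\mu\!\left(\max_{0 \le j \le n-L_n} \frac{S_{L_n}\circ T^j}{L_n} \le \alpha'\right) \le C\bigl[1 - \mu(S_{L_n} > L_n \alpha')\bigr]^{n/L_n^\kappa}.
\]
The lower half of the LDP yields $\mu(S_{L_n} > L_n \alpha') \ge e^{-L_n I(\alpha')(1+o(1))}$; by strict convexity of $I$ and $\alpha'<\alpha$ this is at least $n^{-\gamma}$ for some $\gamma < 1$. Therefore the above is at most
\[
C(1 - n^{-\gamma})^{n/L_n^\kappa} \le C\exp\!\left(-c\,\frac{n^{1-\gamma}}{(\ln n)^\kappa}\right),
\]
which is summable in $n$. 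Borel--Cantelli then shows the event fails eventually almost surely, and sending $\alpha' \uparrow \alpha$ concludes the proof.

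The main technical subtlety sits in part (a), where the single-scale probability decays only like $n^{-\delta}$ and is not directly summable; the geometric-subsequence plus interpolation device is standard but must be executed carefully so that the gap between $n_k$ and $n_{k+1}$ is absorbed. A secondary point is that part (b) genuinely uses both halves of the LDP — the lower estimate is what produces $\gamma<1$ and hence the stretched-exponential summable decay — so the definition must deliver a matching lower bound, which it does via (\ref{rate+}).
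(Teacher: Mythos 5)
Your proposal follows the same Borel--Cantelli strategy as the paper for both parts, and part~(b) matches the paper's argument essentially verbatim. The one genuine difference is your choice of subsequence in part~(a), and there the choice matters more than you acknowledge.

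In part~(a) the union bound plus the LDP give $\mu(A_n(\alpha'))\lesssim n^{-\delta}$ with $\delta = I(\alpha')/I(\alpha)-1+o(1)>0$, which is not directly summable, so one passes to a subsequence. You propose the dyadic subsequence $n_k=2^k$, whereas the paper takes the polynomial one $n_k=k^d$ with $d>[I(\alpha)/\delta]$. That polynomial choice is deliberate: it makes $L_{n_{k}}$ increase by at most one per step, so for any $n$ in the gap $((k-1)^d,k^d]$ one has $L_n\in\{L_{(k-1)^d},L_{k^d}\}$ --- only two candidate window lengths, both of which are controlled by the subsequence Borel--Cantelli. With the dyadic subsequence, $L_n$ increases by roughly $\ln 2/I(\alpha)$ (a fixed positive constant, not small) across each block $[2^k,2^{k+1}]$, so there are several distinct window lengths occurring between consecutive subsequence points, and Borel--Cantelli at the single window $L_{2^k}$ does not cover them. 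Your statement that ``$L_n$ varies slowly between $n_k$ and $n_{k+1}$'' is therefore not accurate as written; to rescue the dyadic argument you would need a Borel--Cantelli estimate for each of the $O(1)$ window lengths per block (feasible, but extra work that the paper's choice avoids). Relatedly, the subsequence estimate controls shifts $m\le n_k-L_{n_k}$, while interpolation requires shifts up to roughly $n_{k+1}-L_n$; with $n_{k+1}-n_k$ a single increment in the polynomial scale the overshoot is at most one extra shift, whereas the dyadic gap is of size $2^k$ and the argument must be rephrased to run the union bound against the larger endpoint $n_{k+1}$. So the paper's subsequence is not a cosmetic choice but what makes the interpolation clean; your sketch would need to be reworked at that point.

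Part~(b) is correct and coincides with the paper's proof: apply hypothesis~(\ref{tau}) to the interval $A=(-\infty,L_n(\alpha-\epsilon)]$, use the lower estimate implicit in the equality~(\ref{rate+}) to get $\mu(S_{L_n}>L_n(\alpha-\epsilon))\ge n^{-\gamma}$ with $\gamma<1$ (using $I(\alpha-\epsilon)<I(\alpha)$), and conclude $\mu(B_n(\epsilon))\le C(1-n^{-\gamma})^{n/L_n^\kappa}\le C\exp(-c\,n^{1-\gamma}/(\ln n)^\kappa)$, which is summable. You correctly identify that the lower LDP estimate is the indispensable ingredient here.
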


\begin{rmk}
If both Assumptions (a) and (b) of Proposition~\ref{prop:erdos1} hold then 
\[
\lim_{n\to \infty}  \max_{0\le m\le n-L_n} \frac{S_{L_n}\circ T^m}{L_n}=\alpha.
\]
\end{rmk} 

\begin{rmk}\label{rem:sum}
The proof of the lemma shows that the upper Erd\H{o}s-R\'enyi law follows from the existence of 
exponential large deviations given by a rate function, while for the  lower Erd\H{o}s-R\'enyi law it suffices to show that  for every $\epsilon>0$ 
the series $\sum_{n>0} \mu (B_n (\epsilon))$, where $B_n(\epsilon)=\{\max_{0\le m\le n-L_n} S_{L_n}\circ T^m \le L_n(\alpha-\epsilon)\}$ is summable. This is usually the harder part to prove in the deterministic case.
\end{rmk}

   \section{Erd\H{o}s-R\'enyi limit laws for Young Towers with exponential tails.}
   
  We now state our main theorem in the case of exponential tails. 
   \begin{thm}~\label{main}
Suppose $(T,M,\mu)$ is a dynamical system modeled by a Young Tower with $\nu_{\Delta} (R>j) \le C \beta_2^j$ for some $\beta_2 \in (0,1)$ and 
some constant $C_2$. Let $\varphi: M \to \mR$ be H\"older with $\int \varphi~d\mu=0$.  Assume $\varphi\not = \psi\circ T-\psi$
for any $\psi\in L^1 (\mu)$.  Let $I(\alpha)$ denote the non-degenerate  rate function  defined on an open set $U\subset \mR$ containing $0$.  Define $S_n (x) =\sum_{j=0}^{n-1}\varphi (T^j x)$.

  Let $\alpha >0$, $ \alpha\in U$ and  define 
\[
L_n=L_n(\alpha)=\left[\frac{\ln n}{I(\alpha)}\right]\qquad n\in\mathbb N.
\]
Then
$$
\lim_{n\to \infty}  \max_{0\le j\le n-L_n} \frac{S_{L_n}\circ T^j (x)}{L_n}=\alpha,
$$
for $\mu$ a.e. $x\in \Omega$.
   \end{thm}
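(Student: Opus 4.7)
The plan is to apply Proposition~\ref{prop:erdos1}: part~(a) yields the upper bound directly from the exponential large deviation principle for Hölder observables on exponentially mixing Young towers (quoted before the theorem), so all the work lies in verifying part~(b). By Remark~\ref{rem:sum}, it suffices to prove that for every $\epsilon>0$
$$\sum_{n\ge 1}\mu\!\left(\bigcap_{m=0}^{n-L_n}\{S_{L_n}\circ T^m\le L_n(\alpha-\epsilon)\}\right)<\infty.$$

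The central step, where the paper's methodological novelty enters, is to obtain a good $d_{\beta_1}$-Lipschitz approximation on the quotient tower $(\overline{\Lambda},d_{\beta_1})$ of the indicator $F_0=\ind_{\{S_{L_n}\le L_n(\alpha-\epsilon)\}}$. Assumption~(a) provides geometric contraction along stable manifolds and Assumption~(c) transfers the ensuing base distance estimates to $M$ across tower excursions, so $|S_{L_n}\varphi(x)-S_{L_n}\varphi(y)|$ is bounded by a geometric series uniformly in $L_n$ whenever $x,y$ lie in the same depth-$L_n$ cylinder of the base. Composing a piecewise-linear cutoff of $\ind_{(-\infty,L_n(\alpha-\epsilon)]}$ (smoothed on a threshold neighbourhood of unit length) with $S_{L_n}$ then produces envelopes $\tilde F_0^-\le F_0\le\tilde F_0^+$ that are $d_{\beta_1}$-Lipschitz with constant $K_n=O(\beta_1^{-L_n})=O(n^c)$, and satisfy $0\le\mu(\tilde F_0^+)-\mu(\tilde F_0^-)\le\mu(|S_{L_n}-L_n(\alpha-\epsilon)|\le\const)$, controlled by a further use of the large deviation estimate.

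With this Lipschitz bound in hand, partition $\{0,\ldots,n-L_n\}$ into $N=\lfloor(n-L_n)/p\rfloor$ blocks of length $p=L_n+g_n$, with gap $g_n=C\log n$ chosen so that $K_n\theta^p\le n^{-2}$, where $\theta<1$ is the exponential decay-of-correlation rate on $\overline\Lambda$ between symbolic-Lipschitz and $L^\infty$ observables. Iterating
$$\mu(\tilde F_0^+\cdot H\circ T^p)\le\mu(\tilde F_0^+)\,\mu(H)+CK_n\theta^p\|H\|_\infty$$
along the chain $H_k=\tilde F_0^+\cdot\tilde F_p^+\cdots\tilde F_{kp}^+$ yields
$$\mu\!\left(\bigcap_{i=0}^{N-1}\{S_{L_n}\circ T^{ip}\le L_n(\alpha-\epsilon)\}\right)\le\mu(\tilde F_0^+)^N+N\cdot n^{-2}.$$
The large deviation bound $\mu(\{S_{L_n}>L_n(\alpha-\epsilon)\})\ge n^{-I(\alpha-\epsilon)/I(\alpha)-o(1)}$ gives $\mu(\tilde F_0^+)\le 1-\tfrac12 n^{-I(\alpha-\epsilon)/I(\alpha)}$ for all large $n$, and since strict convexity of $I$ with $I(0)=0$ forces $I(\alpha-\epsilon)/I(\alpha)<1$, the factor $\mu(\tilde F_0^+)^N$ is stretched-exponentially small. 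The whole bound is therefore summable, and Borel--Cantelli closes part~(b).

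The main obstacle is obtaining the polynomial-in-$n$ Lipschitz constant in the second step. A naive estimate of the symbolic Lipschitz norm of $S_{L_n}\varphi$ grows linearly in $L_n$ on coarse cylinders, which would make $K_n$ a power of $n$ that no exponential decay with $p=O(\log n)$ could absorb; extracting the sharp geometric-series bound from Assumptions~(a) and~(c), and keeping the smoothing length at $O(1)$ rather than $o(1)$, is precisely what keeps $K_n$, $g_n$, and the approximation error $\mu(\tilde F_0^+)-\mu(\tilde F_0^-)$ in simultaneous balance.
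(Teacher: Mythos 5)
Your strategy mirrors the paper's overall structure: upper bound from Proposition~\ref{prop:erdos1}(a) plus the exponential large deviations principle, lower bound via a blocking argument, a Lipschitz approximation of the indicator, and the decay-of-correlations estimate on the quotient tower $\overline\Delta$. The genuine gap is in the central Lipschitz approximation, and it is specifically in the stable direction. Your claim that $|S_{L_n}\varphi(x)-S_{L_n}\varphi(y)|$ is bounded by a geometric series uniformly in $L_n$ when $x,y$ lie in the same depth-$L_n$ cylinder is correct in the unstable direction: Assumption (b) gives $d(f^k x,f^k y)\le K\beta_1^{L_n-k}$ and summing the excursion contributions via Assumption (c) is a genuine geometric series. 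But it fails across stable leaves. The symbolic pseudometric $d_{\beta_1}$ assigns distance zero to any pair $q\in W^s(p)$ (the separation time is infinite), so a $d_{\beta_1}$-Lipschitz function is necessarily constant on stable leaves. Yet Assumption (a) only controls stable contraction in terms of the number of returns to the base $\Lambda$: a point $(x,l)$ sitting in a tall column with no return in $[0,L_n]$ has $d(T^j x, T^j y)=O(1)$ for all $j<L_n$ and $y\in W^s(x)$, so $S_{L_n}\varphi$ varies by $O(L_n)$ across $W^s(x)$. Hence $\Phi\circ S_{L_n}$ is far from constant on stable leaves, and the sup/inf over stable leaves needed to turn it into a function on $\overline\Delta$ produces a ``leakage'' set on which your envelopes must equal $1$ (resp.\ $0$) without control. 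Your proposal gives no estimate of the measure of this set.

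This is exactly what the paper's Lemma~\ref{lemma_technical} and its corollary are for: the set $D(m)$ of tower points whose stable leaf has contracted by time $m$, with $\nu_\Delta(D(m)^c)$ bounded via the exponential tail $\nu_\Delta(R>j)\le C\beta_2^j$. Because one needs of order $\sqrt m$ returns in time $m$ to guarantee contraction, the bound is only stretched-exponentially small in $m$ (of order $m\beta_2^{\sqrt m}$), not exponentially. This is why $O(\log n)$ gaps cannot work even after adding the $D(m)$ correction: $\nu_\Delta(D(L_n)^c)\lesssim (\log n)\,e^{-c\sqrt{\log n}}$ decays slower than any negative power of $n$, and accumulated over $\sim n/\log n$ blocks the total error is $\sim n\,e^{-c\sqrt{\log n}}\to\infty$ rather than a summable series. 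The paper takes block size $\kappa_n=(\log n)^\kappa$ with $\kappa>3$ precisely so that both $\nu_\Delta(D(\kappa_n)^c)$ and the correlation remainder $\beta_3^{\kappa_n}\beta_1^{-\sqrt{\kappa_n}}$ are super-polynomially small, and hence summable after accumulating over the blocks, independently of the relation among $\beta_1$, $\beta_2$ and $I(\alpha)$. Your ``simultaneous balance'' of $K_n$, $g_n$ and the smoothing length correctly trades the unstable Lipschitz constant against the correlation decay, but omits this third, dominant error term.
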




\section{Proof of Theorem~\ref{main}.}

We now give the proof of Theorem~\ref{main}, beginning with some preliminary lemmas. Throughout this proof we will assume that $\varphi$ is Lipschitz, as the modification for H\"older $\varphi$ is straightforward.

The next lemma is not optimal but is useful in allowing us to go from uniform contraction along stable manifolds upon returns to the base of the Young Tower (Property (P3) of~\cite{LY98}) to estimates of the contraction along stable leaves in the whole manifold. 

\begin{lemma}\label{lemma_technical}
Let $\beta_1$ be defined as in Section (2.1) Assumption (a) and $\beta_2$ be as in Theorem 2.2.
Let $D(m)=\{ (x,j)\in \Delta: |T^k W^s (x,j) | < {(\sqrt{\beta_1})}^{k} \mbox{ for all } k\ge m \} $.  Then for any $\delta >0$ there exists 
 $K(\delta) >0$ such that for all $m\ge K$, $\nu_{\Delta} (D (m)^c)\le C {(\beta_2+\delta)}^{m/2} $ for some constant $C>0$.
\end{lemma}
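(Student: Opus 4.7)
The plan is to reduce the diameter control to a question about the number of full returns to the base within $k$ iterations, and then to estimate the measure of the bad set via an exponential Chernoff bound for sums of return times, using the tail hypothesis $\bar\nu(R>j)\le C\beta_2^j$. For $(x,j)\in\Delta$, write $\tau_n(x):=R(x)+R(fx)+\cdots+R(f^{n-1}x)$ and let $n(k,x,j):=\max\{n:\tau_n(x)-j\le k\}$ count the full returns completed within $k$ iterations of $T$ starting from $(x,j)$. Assumption~(a) of Section~\ref{sec-NUH} gives that after $n(k)$ full returns the image has diameter at most $K\beta_1^{n(k)}$, while Assumption~(c) controls the inflation of the diameter during the partial climb immediately following. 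Hence
\[
|T^k W^s(x,j)|\ \le\ C_1\,\beta_1^{n(k,x,j)}
\]
for some $C_1$ depending only on $K$, and the condition $C_1\beta_1^{n(k)}<(\sqrt{\beta_1})^k$ becomes $n(k)>k/2+c_0$ with $c_0:=\log C_1/\log(1/\beta_1)$. Consequently
\[
D(m)^c\ \subseteq\ \bigcup_{k\ge m}\Bigl\{(x,j)\in\Delta:\tau_{\lfloor k/2\rfloor+c_0+1}(x)>k+j\Bigr\}.
\]

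For the measure estimate I use a Chernoff bound on $\tau_n$. Since $\bar\nu(R>j)\le C\beta_2^j$, the moment generating function $M(t):=\int e^{tR}\,d\bar\nu$ is finite for $t<\log(1/\beta_2)$, with $M(t)\to 1$ as $t\to 0^+$. The quotient map $\bar f$ is Gibbs--Markov and hence exponentially mixing, so a standard spectral argument applied to the twisted transfer operator $P(e^{tR}\cdot)$ yields
\[
\bar\nu(\tau_n>k)\ \le\ C\,M(t)^n\,e^{-tk}.
\]
Taking $n=\lfloor k/2\rfloor+c_0+1$ and choosing $t$ close to $\log(1/\beta_2)$ gives $\bar\nu(\tau_n>k)\le C(\beta_2+\delta)^{k/2}$ for arbitrary $\delta>0$ once $k$ (and hence $m$) exceeds a threshold $K(\delta)$. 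Lifting to $\nu_\Delta$ via its product-with-counting-measure structure on levels, and summing the geometric series over $k\ge m$, yields the desired $\nu_\Delta(D(m)^c)\le C'(\beta_2+\delta)^{m/2}$.

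\emph{Main obstacle.} The delicate step is the Chernoff estimate: the return times $R,R\circ\bar f,R\circ\bar f^2,\ldots$ are not independent under $\bar\nu$, so one cannot invoke i.i.d.\ large deviations directly. One must exploit the spectral gap of the Gibbs--Markov transfer operator for $\bar f$ to obtain $\int e^{t\tau_n}\,d\bar\nu\le C_2\,M(t)^n$ up to a $t$-dependent constant that is harmless for $t$ in a compact subinterval of $(0,\log(1/\beta_2))$. One must then carefully optimize $t$ so that the rate in the final bound is $\beta_2+\delta$ rather than a larger constant, with the mismatch and the multiplicative constant $C_1$ from Step~1 absorbed into the threshold $K(\delta)$ on $m$.
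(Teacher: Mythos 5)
Your approach is genuinely different from the paper's, and there is a real obstruction to it that is worth understanding, because it also sheds light on what the stated lemma can and cannot say.

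The paper's own argument is far more elementary and avoids any exponential-moment or spectral-gap machinery. Set $\tau_r(x,j)$ to be the number of returns of $F^k(x,j)$ to the base for $k\in[1,r]$ and $B_r=\{\tau_r\le\sqrt r\}$. The key observation is a pigeonhole: if at most $\sqrt r$ returns occur in $[0,r]$, then at least one of the return times along the orbit exceeds $\sqrt r$, so $B_r\subset\bigcup_{k\le r}F^{-k}\{R>\sqrt r\}$ and a union bound gives $\nu_\Delta(B_r)\lesssim r\,\beta_2^{\sqrt r}$. On the complement $B_r^c$ one has at least $\sqrt r$ full returns, so by (a) and (c), $|T^r W^s(x,j)|\lesssim\beta_1^{\sqrt r}$. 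Summing over $r\ge m$ gives a bound of size roughly $m\,\beta_2^{\sqrt m}$. Note that this proves a version of the lemma with $\beta_1^{\sqrt k}$ replacing $\beta_1^{k/2}$ in $D(m)$ and a correspondingly weaker (sub-exponential, $\beta_2^{\sqrt m}$-type) measure bound; this weaker form is all that the downstream argument uses (the application takes $m=\kappa_n=(\ln n)^\kappa$ with $\kappa>3$, and then $\beta_1^{\sqrt{\kappa_n}}$ still decays faster than any polynomial in $n$).

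The flaw in your Chernoff plan is that requiring $n(k)>k/2+c_0$ full returns in time $k$ is \emph{not} a rare event in general. The expected return time $\bar R=\int R\,d\bar\nu=1/\nu_\Delta(\Lambda)$ typically exceeds $2$ whenever the tower has more than half its mass above the base. In that case, by Birkhoff, $n(k)\sim k/\bar R<k/2$ for $\bar\nu$-a.e. base point, so $\{\tau_{\lfloor k/2\rfloor}>k\}$ has measure tending to $1$, not $0$. Your Chernoff bound cannot rescue this: by Jensen, $M(t)\ge e^{t\bar R}$, hence $M(t)e^{-2t}\ge e^{t(\bar R-2)}>1$ for all $t>0$ when $\bar R>2$, so the exponent in $(M(t)e^{-2t})^{k/2}$ has the wrong sign and the bound is vacuous. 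In other words, the lemma as written (with $(\sqrt{\beta_1})^k$ and $(\beta_2+\delta)^{m/2}$) cannot hold without further hypotheses (e.g.\ $\bar R<2$), and indeed the paper's own proof establishes only the $\sqrt{\cdot}$ version. You should aim for the pigeonhole argument: the gap between the demanded number of returns and what is typical is precisely why the paper asks for only $\sqrt r$ returns rather than $r/2$.
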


\begin{proof}
Let $\tau_r(x,j):=\# \{ k:  1< k \le r: F^k (x,j) \in \Lambda \}$, so that $\tau_r(x,j)$ denotes the number of times $k\in[1,r]$ that $F^k (x,j)$ lies in the base of the Young Tower.
Let $B_r=\{ (x,j) \in \Delta : \tau_r (x,j) \le \sqrt{r}\}$.
If $\tau_r (x,j) \le \sqrt{r}$ then there is at least one $k\in[0,r]$, such that $R(F^k (x,j)) >\sqrt{r}$ and hence
$B_r\subset \bigcup_{k=1}^r F^{-k} (R > \sqrt{r})$. Thus 
$\nu_{\Delta} ( B_r )\le r \nu(R>\sqrt{r}) < C_2 r {\beta_2}^{ \sqrt{r}}$.

Suppose now that $(x,j) \in B_r^c$. Then $|T^{r}W_s((x,j))|\le 2K
\beta_1^{ \sqrt{r}}$ by (a) and (c) and moreover
$\nu_{\Delta} (\bigcup_{r\ge m}  ( B_r ) )\le \sum_{r\ge m} C_2 r {\beta_2}^{ \sqrt{r}}$. Now the lemma follows from a 
straightforward calculation.

\end{proof}

\begin{cor}\label{cor_technical}
Lift $\varphi:  M \to \mR$ to $\varphi :\Delta \to \mR$ by defining $\varphi (x,j)=\varphi (T^j x)$.
Let $\beta_1$ be defined as in Section (2.1) (a).
Suppose $p\in D(m)=\{ (x,j)\in \Delta: |T^k W^s (x,j) | < {(\sqrt{\beta_1})}^{k} \mbox{ for all } k\ge m \} $ and let $L_n=[\frac{\ln n}{I(\alpha)}]$. Then if $q\in W^s (p)$,
$|S_{L_n} \varphi \circ F^m (p) - S_{L_n} \varphi \circ  F^m (q)|\le C \|\varphi \|_{\infty}  L_n  {\beta_1}^{m/2}$.
\end{cor}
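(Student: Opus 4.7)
The plan is to expand the difference of Birkhoff sums termwise, use the semiconjugacy $\pi\circ F = T\circ\pi$ to translate each term down to the manifold $M$, and then invoke the Lipschitz control of $\varphi$ together with the diameter estimate that defines the set $D(m)$. Concretely, I would start from
\begin{equation*}
S_{L_n}\varphi\circ F^m(p) - S_{L_n}\varphi\circ F^m(q) = \sum_{i=0}^{L_n-1}\bigl[\varphi(F^{m+i}p)-\varphi(F^{m+i}q)\bigr],
\end{equation*}
and use the identity $\varphi = \varphi\circ\pi$ (built into the lift, since $\varphi(x,j)=\varphi(T^jx)$ and $\pi\circ F^k=T^k\circ\pi$) to rewrite each summand as $\varphi(T^{m+i}\pi p)-\varphi(T^{m+i}\pi q)$.

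Next, since $q\in W^s(p)$, the projected points $\pi p$ and $\pi q$ lie in a common local stable leaf of $M$, and $T^{m+i}\pi p$, $T^{m+i}\pi q$ both sit in $\pi(F^{m+i}W^s(p))$. The hypothesis $p\in D(m)$ says exactly that $|T^kW^s(x,j)|<(\sqrt{\beta_1})^{k}$ for every $k\ge m$, so taking $k=m+i\ge m$ yields the pointwise bound $d(T^{m+i}\pi p, T^{m+i}\pi q)\le(\sqrt{\beta_1})^{m+i}$. Lipschitz (or, in the H\"older case, the analogous exponent adjustment) continuity of $\varphi$ then bounds each summand by a constant multiple of $(\sqrt{\beta_1})^{m+i}\le\beta_1^{m/2}$. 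Summing the $L_n$ summands gives the stated bound, with the Lipschitz constant of $\varphi$ absorbed into the constant $C$ in front of $\|\varphi\|_\infty$; a geometric-series estimate would even let one remove the $L_n$ factor, but the crude bound as stated is enough for the downstream argument and is more transparent.

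The main obstacle is not really an obstacle but a bookkeeping subtlety: one must carefully interpret the tower-level diameter condition $|T^kW^s(x,j)|<(\sqrt{\beta_1})^{k}$ as an honest distance bound in $M$ along orbits of $T$, which is exactly what the semiconjugacy $\pi\circ F^k=T^k\circ\pi$ provides. All the real analytic work---the exponential stable-leaf contraction for points in $D(m)$, proved via the $\sqrt{r}$-many returns argument together with assumptions (a) and (c) on the tower---has already been carried out in Lemma~\ref{lemma_technical}, so the corollary reduces to a one-line application of the Lipschitz property followed by a trivial sum.
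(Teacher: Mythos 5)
Your argument is correct and is exactly the natural, essentially forced, way to prove this corollary — the paper in fact states it without proof, and your chain of reductions (Birkhoff sum expanded termwise, $\pi\circ F^k=T^k\circ\pi$ to move down to $M$, the defining diameter bound of $D(m)$ applied with $k=m+i$, then Lipschitz/H\"older continuity) is the argument the authors clearly have in mind. You are also right that a geometric sum would eliminate the $L_n$ factor; the paper's looser bound is all that is needed later.

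Two small remarks. First, the factor $\|\varphi\|_\infty$ in the stated bound is not what your (correct) argument produces: you get the Lipschitz (resp.\ H\"older) seminorm of $\varphi$, not its sup norm. Absorbing that into $C$ is fine as long as one understands $C$ to depend on $\varphi$, but it is worth noting the statement's $\|\varphi\|_\infty$ is really a stand-in for the regularity constant of $\varphi$, not its supremum. Second, your closing comment that the "real analytic work was done in Lemma~\ref{lemma_technical}" slightly misattributes what you are using: the corollary invokes only the \emph{definition} of $D(m)$ (the pointwise diameter decay along stable leaves), whereas Lemma~\ref{lemma_technical} proves the complementary \emph{measure} estimate $\nu_\Delta(D(m)^c)\lesssim(\beta_2+\delta)^{m/2}$. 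The two facts are used in tandem in the proof of Theorem~\ref{main}, but the corollary itself needs only the former. Neither point affects the validity of your proof.
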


\begin{proof}[Proof of Theorem~\ref{main}]
The main idea of the proof of Theorem~\ref{main} is to approximate functions on $\Delta$ by functions constant on stable manifolds, so that correlation decay 
estimates on the quotiented tower from \cite[Corollary 2.9]{KKM} can be used. 

We define an equivalence relation on $\Lambda$ by $z\sim x$ if $z\in W^s (x)$ and form the quotient space  $\overline{\Lambda}=\Lambda/\sim$
with corresponding partition $\{ \overline{\Lambda_j}\}$. The return time function $R: \overline{\Lambda}\to \N $ is well-defined (and the same in the quotiented and
unquotiented tower) as 
each  stable disk $W^s (x)$  lies in $\Lambda_j$ if $x\in \Lambda_j$ and $T^{R(j)}(W^s (x)) \subset W^s (T^{R(j)} x)$ for all $x\in \Lambda_j$.
So we have a well-defined induced map $\bar{f}: \overline{\Lambda} \to \overline{\Lambda}$.  We similarly define the 
quotient space of $\Delta$, denoted $\overline{\Delta}$. The separation time for $f: \overline{\Lambda}
\to \overline{\Lambda}$ extends to a separation time on $\overline{\Delta}$ by defining

\[  s((x,l),(y,l^{'})) =\left\{ \begin{array}{ll}
         s(x,y) & \mbox{if $l=l^{'}$};\\
        1 & \mbox{if $l \neq l^{'}$}.\end{array} \right. \]

We fix  $\beta_1$  from Section 2.1 Assumption (a) and define the metric $d_{\beta_1}$ on $\overline{\Delta}$ by $d_{\beta_1} (p,q)=\beta_1^{s(p,q)}$. Here we write
$p=(x,l)\in \overline{\Delta}$, $q=(y, l^{'})$.
We define the $\|\cdot\|_{\beta_1}$-norm by $\|\phi \|_{\beta_1}:=\|\phi\|_{\infty} + \sup_{p,q\in \Delta} \frac{|\phi (p)-\phi (q)|}{d_{\beta_1} (p,q)}$.
Functions $\phi$ and $\psi$ which are constant on stable manifolds in $\Delta$ naturally project to functions $\phi$ and $\psi$ (we use the same notation) on $\overline{\Delta}$ 
with the same $d_{\beta_1}$ Lipschitz constant and $L^{\infty}$ norm. If $\phi: \Delta \to \mR$
is constant on stable manifolds we  define the $\|\cdot\|_{\beta_1}$-norm by $\|\phi \|_{\beta_1}:=\|\phi\|_{\infty} + \sup_{p,q\in \Delta} \frac{|\phi (p)-\phi (q)|}{d_{\beta_1} (p,q)}$.

With this set-up the correlation estimate of ~\cite[Corollary 2.9]{KKM} can be stated:
\begin{prop}~\cite[Corollary 2.9]{KKM}

Suppose that $\phi,~\psi: \Delta \to \mR$  are constant on stable manifolds then for some constants $C$, $\beta_3\in (0,1)$,
\[
| \int_{\Delta}  \phi (\psi\circ F^j)\, d\nu_{\Delta} -  \int_{\Delta}  \phi \, d\nu_{\Delta}  \int_{\Delta}  \psi \, d\nu_{\Delta}|\le C\|\phi\|_{\beta_1} \|\psi\|_{\infty} \beta_3^j
\]
for all $j\ge 0$ .

\end{prop}

In the case that $\varphi$ is not an $L^1$ coboundary i.e. there exists no $\psi$ such that
$\varphi =\psi \circ T -\psi$, $\psi \in L^1 (m)$ it has been shown~\cite{Nic,Reybellet_Young} under the assumptions
of Theorem~\ref{main} that $\varphi$ has exponential large deviations with a rate function $I(\alpha)$. Thus assumption (a)
of Proposition~\ref{prop:erdos1} holds and we therefore only need to prove 
$\mu (\{\max_{0\le m\le n-L_n} S_{L_n}\circ T^m \le L_n(\alpha-\epsilon)\})$ is summable
in order to get the lower bound by an application of the Borel-Cantelli lemma.   
This direction is more difficult and uses
differential and dynamical information on the system.


For the reader's convenience we recall our assumptions:

\noindent {\it Assumptions:} there exist constants $K\ge 1$ and $0<\beta_1<1$ such that 

(a) if $z \in W^s (x)$ then $d(f^n z, f^n x) \le K \beta_1^n$;

(b)  if $z\in W^u (x)$ then  $d(f^n z, f^n x)\le K \beta_1^{s(x,z)-n}$;

(c) if $z,x\in \Lambda$ then $d(T^j z,T^j x) \le K (d(z,x)+d(fz, fx))$ for all $0\le j \le \min \{ R(z), R(x)\}$.

We lift $\varphi$ from $M$ to $\Delta$ by defining $\varphi (x,j)=\varphi (T^j x)$. We will use the same notation for 
$\varphi$ on $\Delta$ as we use for $\varphi$ on $M$. 


To simplify notation we will sometimes write $p=(x,j)$ for a point $p\in \Delta$.

For $0<\epsilon \ll \alpha$ put 
$$
A_n(\epsilon):=\{ (x,j) \in \Delta: S_{L_n} \le L_n (\alpha-\epsilon)\},
$$
where 
$$
S_n (x,j)=\sum_{k=0}^{n-1} \varphi \circ F^k (x,j)
$$
is the $n$th ergodic sum of $\varphi$.
Define
$$
B_n (\epsilon)=\bigcap_{m=0}^{n-L_n} F^{-m}A_n(\epsilon)
=\left\{ (x,j) \in \Delta: \max_{0\le m\le n-L_n}S_{l_n}\circ F^m \le L_n (\alpha -\epsilon)\right\}.
$$

The theorem follows by the Borel-Cantelli lemma once we show that 
$\sum_{n=1}^{\infty} \nu_{\Delta} (B_n(\epsilon)) <\infty$.

To do this we  will use a blocking argument to take advantage of decay of correlations and 
intercalate by blocks of length $\kappa_n:=\ln^{\kappa}(n)$,  where $\kappa$ will be specified later.

For $1\le j <r_n:= [\frac{n}{\kappa_n}]$ put 
\[
E_n^j (\epsilon):=\bigcap_{m=1}^{j} F^{-m[\kappa_n]}A_n(\epsilon)
\]
which is a nested sequence of sets. Note that 
$\nu_{\Delta}  (B_n (\epsilon) )\le \nu_{\Delta} (E_n^{r_n} (\epsilon) )$. 

We also have the recursion
\[
E_n^{j}(\epsilon)=A_n (\epsilon) \cap F^{-\kappa_n}E_n^{j-1}(\epsilon)
\]
 $j=1,\dots ,r_n$, which implies
 \[
 \nu_{\Delta} (E_n^{j}(\epsilon))=\nu_{\Delta} (A_n (\epsilon) \cap F^{-\kappa_n}E_n^{j-1}(\epsilon) )
 \]

Recall $D(m)=\{ (x,j) \in \Delta : |T^k W^s (x) | <(\sqrt{\beta_1})^{k} \mbox{ for all } k\ge m \}$. 
Hence given $\delta >0$ such that $\beta_2^{'}:=\beta_2+\delta<1$ by  Lemma~\ref{lemma_technical}
we  may estimate $\nu_{\Delta} (D(\kappa_n)^c) \le (\beta_2^{'})^{\kappa_n/2}$ for sufficiently large $n$.

Furthermore if $m\ge \kappa_n$,  $p\in D(m)$ and $q\in W^s (p)$ then $|S_{L_n}\circ F^m(p)-S_{L_n}\circ F^m(q)|\le C\|\varphi\|_{\infty}L_n \beta_1^{\kappa_n/2}$ 
by the corollary to Lemma~\ref{lemma_technical}. We will take $\kappa$ and $n$ large enough that $C\|\varphi\|_{\infty}L_n \beta_1^{\kappa_n/2}<\frac{\epsilon}{2}$.

 Accordingly for 
large $n$ if $m\ge \kappa_n$,  $p\in D(m)\cap F^{-m} A_n (\epsilon) $ and $q\in W^s (p)$ then $F^m q \in A_n (\frac{\epsilon}{2} )$.

\vspace{.5cm}

\noindent {\it First Approximation.}

We now approximate $1_{A_n (\epsilon)\cap D(\kappa_n)}$ by a function $g_n^{\epsilon}$ which is constant on stable manifolds  by
requiring that if $p \in A_n (\epsilon) \cap D(\kappa_n)$ then $g_n^{\epsilon} (p)=1$ on $W^s(p)$ and $g_n^{\epsilon}=0$ otherwise. 
Thus $  \{ g_n^{\epsilon} =1\} \subset A_n (\frac{\epsilon}{2})$
and
\[
\nu_{\Delta} (g_n=1)\le \nu_{\Delta} (A_n (\frac{\epsilon}{2}))
\]
Furthermore
\[
A_n (\epsilon) \subset \{ g_n^{\epsilon}=1\} \cup D(\kappa_n)^c
\]
hence
\[
\nu_{\Delta} (A_n (\epsilon)) \le \nu_{\Delta} (g_n^{\epsilon}=1) + \nu_{\Delta}  (D(\kappa_n)^c).
\]
For $j=1,\ldots, r_n$ let
\[
G_n^j (\epsilon)=:\prod_{i=1}^j g_{n}^{\epsilon} \circ F^{i[\kappa_n]}
\]
and note $\nu_{\Delta}  (E_n^j (\epsilon)) \le \nu_{\Delta} (G_n^j (\epsilon)) + j \nu_{\Delta}  (D(\kappa_n)^c)$.

\vspace{.5cm}

\noindent {\it Second Approximation}

We will approximate $g_n^{\epsilon}$  (considered as a function on $\overline{\Delta}$) by
a $d_{\beta_1}$ Lipschitz function $h_n^{\epsilon}$  which extends to a function on $\Delta$ by 
requiring $h_n^{\epsilon}$ to be  constant on stable
manifolds.

 First define 
$$
h_n^{\epsilon} (\bar{p}):=\max \{0, 1-d_{\beta_1}(\bar{p},\mbox{supp} (g_n^{\epsilon} )) \beta_1^{-\sqrt{\kappa_n}}\}
$$
on $\overline{\Delta}$ and then extend so that it is constant on local stable manifolds and hence is a function on $\Delta$.
In particular $h_n^{\epsilon}$ has support in points such that $d_{\beta_1} ( p, \mbox{supp}  (g_n^{\epsilon} ))\le \beta_1^{\sqrt{\kappa_n}}$ and
$\|h_n^{\epsilon}\|_{\beta_1}\le \beta_1^{\sqrt{\kappa_n }}$ by~\cite[Section 2.1]{Stein}.

By $(b)$ and $(c)$  if $z\in W^{u} (p)$ and $d_{\beta_1} (p,z)< \beta_1^{\sqrt{\kappa_n }}$ then $d(F^j p, F^j z)\le 2K \
\beta_1^{\sqrt{\kappa_n}-L_n}$ for all $j\le L_n$.

Hence if $d_{\beta_1} (z,\mbox{supp} (g_n^{\epsilon} ))\le \beta_1^{\sqrt{\kappa_n}}$ then there exists $p\in \mbox{supp} (g_n^{\epsilon} )$ such that $d(F^j p, F^j z)\le 2K \beta_1^{\sqrt{\kappa_n}-L_n}$ for all $j\le L_n$ and 
hence 
\[
|  \sum_{j=0}^{L_n} [\varphi \circ F^j (z) - \varphi \circ F^j (p)]|\le C L_n \beta_1^{\sqrt{\kappa_n}-L_n }\le \frac{\epsilon}{2}
\]
for sufficiently large $n$.  This implies  that $\nu_{\Delta} (g_n^{\epsilon} )\le \nu_{\Delta} (h_n^{\epsilon}) \le \nu_{\Delta} (A_n (\frac{\epsilon}{2}))$.

As $h_n^{\epsilon} $ Lipschitz  in the $d_{\beta_1}$ metric we  obtain by  Proposition~6.3 
\begin{eqnarray*}
\nu_{\Delta}  (E_n^{j} (\epsilon) ) 
& \le & \int_{\Delta}  (G_n^{j} (\epsilon))\,d\nu_{\Delta} +j\nu_{\Delta} (D(\ln^{k}( n)^c)\\
&\le &\int_{\Delta} (g_n^{\epsilon}  \cdot G_n^{j-1} \circ F^{\kappa_n} )\,d\nu_{\Delta}  + Cn {(\beta_1^{'})} ^{\kappa_n/2} \\
&\le &\int  h_n^{\epsilon}\,d\nu_{\Delta}  \int G_n^{j-1} (\epsilon)\, d\nu_{\Delta}
+c_3 \beta_3^{\kappa_n} \|h_n^{\epsilon}\|_{\beta_1} \|G_n^{j-1} (\epsilon)\|_{\infty} +Cj {(\beta_1^{'})}^{\kappa_n/2} \\
&\le&  \nu_{\Delta}(A_n (\frac{\epsilon}{2})) \nu_{\Delta} (G_n^{j-1}(\epsilon) )+c_3 \beta_3^{\kappa_n}\beta_1^{-\sqrt{\kappa_n}}  +Cj {(\beta_1^{'})} ^{\kappa_n/2}. 
\end{eqnarray*}
Iterating this estimate yields
$$
\nu_{\Delta}  (E_n^0 (\epsilon) ) 
\le \nu_{\Delta} ( A_n (\frac{\epsilon}{2}) )^{[n/\kappa_n]} +n c_3 \beta_3^{\kappa_n} \beta_1^{-\sqrt{\kappa_n}}
+n^2C\beta_1^{\kappa_n/2}.
$$
The terms $n c_3 \beta_3^{\kappa_n} \beta_1^{-\sqrt{\kappa_n}}$ and $n^2C\beta_1^{\kappa_n}$ are  summable if we take
$\kappa >3$ in the definition of $\kappa_n$.


In order to verify summability of the  
$\nu_{\Delta} ( A_n (\frac{\epsilon}{2} ))^{[n/\kappa_n]}$ term
we proceed as in the proof of Proposition~\ref{prop:erdos1}
using large deviations. 
By the existence of a rate function we obtain
$\nu_{\Delta}((A_n (\frac{\epsilon}{2}))^c)\ge e^{-L_n(I(\alpha-\frac{\epsilon}{2})+\delta_1)}$ for some $0<\delta_1$
and hence $1-\nu_{\Delta}   (A_n (\frac{\epsilon}{2}))\ge e^{-L_n(I(\alpha-\frac{\epsilon}{2})+\delta_1)}$
for some $0<\delta_1$.  
Hence   $\nu_{\Delta}   (A_n (\frac{\epsilon}{2}))\le 1-n^{-\rho}$ where
$\rho=\frac{I(\alpha-\frac{\epsilon}{2})}{I(\alpha)}+\delta_1$ is less than $1$ for $\delta_1>0$ small enough.
The principal term can  be bounded by
$$
 \nu_{\Delta}( A_n (\frac{\epsilon}{2}))^{[n/\kappa_n]}
 \le (1-n^{-\rho})^{[n/\kappa_n]}
  $$
 which is also summable over $n$.
Hence by Borel-Cantelli we conclude that the set $\{ B_n(\epsilon) \mbox{ i.o.}\}$ has 
measure zero. This concludes the proof.

\end{proof}


\section{Erd\"os-R\'enyi laws for  Young Towers with polynomial tails.}\label{sec:intro}

We now consider Young Towers with polynomial tails in the sense that $\nu_{\Delta} (R>n) \le Cn^{-\beta}$.

\subsection{Upper bounds.}

We first prove a  general result. We suppose that $(T, M, \mu)$ is an ergodic dynamical
system and $\varphi: M \to \mathbb R$ is a bounded observable. We assume also
$$
\mu \!\left(\left|\frac{1}{n}S_n (\varphi)-\bar\varphi\right| > \epsilon\right)\le C(\epsilon)  n^{-\beta}.
$$

\begin{thm}\label{poly1} Assume that $\bar\varphi=\mu(\varphi)=0$, $\varphi$ is bounded  and  for every $\epsilon>0$ there exists a  constant $C(\epsilon)>0$ and 
$\beta>1$ so that 
$$
\mu \!\left(\left|\frac{1}{n}S_n (\varphi)\right| > \epsilon\right)\le C (\epsilon)  n^{-\beta}.
$$
Then if  $\tau>\frac{1}{\beta}$ for $\mu$ a.e. $x\in M$, 
\[
  \lim_{n\to\infty} \max_{0\le m\le n-n^{\tau}} n^{-\tau}S_{n^{\tau}}\circ T^m(x)  = 0.
\]
\end{thm}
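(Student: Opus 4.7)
The plan is to split the claim $\lim_{n\to\infty}\max_m n^{-\tau} S_{n^\tau}\circ T^m = 0$ into a $\limsup \le 0$ upper bound and a $\liminf \ge 0$ lower bound. The lower bound is free: the choice $m=0$ in the maximum gives $\max \ge n^{-\tau} S_{n^\tau}(x)$, which tends to $0$ for $\mu$-a.e.\ $x$ by Birkhoff's ergodic theorem (with $N=\lceil n^\tau\rceil\to\infty$). Everything else is devoted to the upper bound.

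The naive union bound
\[
\mu\!\left(\max_{0\le m\le n-n^\tau} S_{n^\tau}\circ T^m > \epsilon n^\tau\right) \le n\cdot C(\epsilon) n^{-\tau\beta} = C(\epsilon) n^{1-\tau\beta}
\]
is not summable in $n$ under the mere hypothesis $\tau\beta>1$; direct Borel--Cantelli over all $n$ would require $\tau\beta>2$. The remedy is to apply Borel--Cantelli along a polynomial subsequence and then interpolate. Fix $\gamma > 1/(\tau\beta-1)$, set $n_k := \lfloor k^\gamma\rfloor$, and introduce the slightly enlarged maximum
\[
\tilde M_k := \max_{0\le m\le n_{k+1}} n_k^{-\tau}\,|S_{n_k^\tau}\circ T^m|.
\]
The union bound together with $T$-invariance gives $\mu(\tilde M_k > \epsilon)\le (n_{k+1}+1)\,C(\epsilon)\,n_k^{-\tau\beta} = O\!\left(k^{-\gamma(\tau\beta-1)}\right)$, which is summable by choice of $\gamma$. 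Borel--Cantelli applied along a countable sequence $\epsilon_j\downarrow 0$ then yields $\tilde M_k\to 0$ almost surely.

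It remains to transfer this to all $n$. For $n\in[n_k,n_{k+1}]$ set $L:=n^\tau$ and write $L = n_k^\tau + r$ with $0\le r \le n_{k+1}^\tau - n_k^\tau$. Because $n_{k+1}/n_k\to 1$ for any polynomial subsequence, we have $r/L\to 0$. For each $m\in[0,n-L]\subset[0,n_{k+1}]$ the decomposition $S_L\circ T^m = S_{n_k^\tau}\circ T^m + S_r\circ T^{m+n_k^\tau}$ yields
\[
L^{-1} S_L\circ T^m \le L^{-1}|S_{n_k^\tau}\circ T^m| + (r/L)\|\varphi\|_\infty \le \tilde M_k + (r/L)\|\varphi\|_\infty,
\]
using $L^{-1}\le n_k^{-\tau}$ and the boundedness of $\varphi$. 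Taking the maximum over $m$ and sending $n,k\to\infty$ gives $\limsup_n\max_m L^{-1}S_L\circ T^m\le 0$ a.s., completing the upper bound.

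The main obstacle is the mismatch between the exponent $1-\tau\beta$ delivered by the union bound and the exponent strictly less than $-1$ required for Borel--Cantelli along all $n$. The polynomial subsequence $n_k=\lfloor k^\gamma\rfloor$ resolves this elegantly in one stroke: taking $\gamma$ large enough buys summability of $\mu(\tilde M_k>\epsilon)$, while any such polynomial spacing automatically satisfies $n_{k+1}/n_k\to 1$, so the interpolation remainder $r$ is negligible compared to the window length $L$. No further regularity of the system beyond the hypothesised polynomial concentration and ergodicity is used.
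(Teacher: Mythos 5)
Your proof is correct and takes essentially the same route as the paper's: a union bound combined with the polynomial large-deviation tail, Borel--Cantelli along a polynomially spaced subsequence, and interpolation to all $n$ using boundedness of $\varphi$ together with $n_{k+1}/n_k\to 1$. Your version is a touch cleaner on the interpolation step, since taking the maximum in $\tilde M_k$ over $m\in[0,n_{k+1}]$ automatically covers all $n\in[n_k,n_{k+1}]$, whereas the paper's sets $A_n$ use $\max_{0\le m\le n-L_n}$ and the resulting small mismatch in the range of $m$ when passing from $n=k^p$ to general $n$ is glossed over.
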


\begin{proof} Choose  $\tau>\frac{1}{\beta}$ and put $L_n=n^{\tau}$.
Let $\epsilon>0$ and define 
\[
A_n:=\{ x\in X:  \max_{0\le m\le n-L_n} |S_{L_n}\circ T^m | \ge L_n \epsilon\}.
\]
Then $\mu (A_n)\le n \mu (S_{L_n} \ge \epsilon L_n)\le c_1 (\delta)   n^{1-\tau\beta }=c_1 n^{-\delta}$,
for some $c_1 >0$, where $\delta=\tau\beta -1$.

Let $p>\frac{1}{\delta}$ (i.e.\ $\delta p>1$) and consider the subsequence $n=k^{p}$. 
Since $\sum_k\mu(A_{k^p})\le c_1\sum_kk^{-p\delta}<\infty$, we obtain via the Borel-Cantelli lemma
that for $\mu$ a.e. $x \in X$ 
\[
 \limsup_{k\to\infty} \max_{0\le m\le k^p-L_{k^p}} L_{k^p}^{-1}|S_{L_{k^p}}\circ T^m| \le \epsilon.
\]
To fill the gaps use that $k^p-(k-1)^p=O(k^{p-1})$ and we obtain (as $\varphi$ is bounded) that
\[
\frac{S_{L_{k^p}}\circ T^m }{L_{k^p}}=\frac{S_{L_{(k-1)^p}}\circ T^m}{L_{k^p}}+\mathcal{O}\!\left(\frac{1}{k}\right)
\]
where the implied constant is uniform in $x\in X$as $\varphi$ is bounded.
As $\lim_{k\to \infty}\frac{k^p}{(k-1)^{p}}=1$ we conclude 
\[
\lim_{k\to \infty} \frac{|S_{L_{k^{p}}}|}{L_{k^{p}}}=\lim_{k\to \infty} \frac{|S_{L_{(k-1)^{p}}}|}{L_{k^p}}.
\]
Since any $n\in \mathbb N$ satisfies $(k-1)^p \le n\le k^p$ for some $k$ and $\varphi$ is bounded, it follows that 
\[
 \limsup_{n\to \infty} \max_{0\le m\le n-L_{n}} |S_{L_{n}}\circ T^m |/L_{n} \le \epsilon.
\]
As $\epsilon$ was arbitrary this gives the upper bound. 
\end{proof}




\subsection{Lower bounds.}

Now we suppose there exists $\gamma \ge \beta$, an observable $\varphi$ and an $\alpha>0$ such that for all $n$,
$\mu \!\left(\left|\frac{1}{n}S_n (\varphi)-\bar\varphi\right| > \alpha \right)\ge C(\alpha) n^{-\gamma}$.
We show if we take a window of length $n^{\tau}$, $\tau < \frac{1}{1+\frac{\beta+1}{\beta}\gamma}$ then the time-averaged
fluctuation persists almost surely. In the case that $\gamma$ limits to $\beta$ then we require $\tau < \frac{1}{2+\beta}$. Comparing
Theorem~\ref{poly1} and Theorem~\ref{poly2} there is a gap $\frac{1}{1+\frac{\beta+1}{\beta}\gamma}<\tau <\frac{1}{\beta}$
for which we don't know the almost sure limit of windows of length $n^{\tau}$. In Example~\ref{example1} we show that $\tau<\frac{1}{\beta+1}$
is required to ensure that a time-averaged
fluctuation persists almost surely.

\begin{thm}\label{poly2}

Suppose that $(T,M,\mu)$ is modeled by a Young Tower and $\bar{\nu}_{\Delta} (R>n)\le C n^{-\beta }$.
Suppose that $\gamma \ge \beta$ and there exists a function $C$ which is continuous on a neighborhood of  $\alpha>0$ such that
\[
\mu \!\left(\left|\frac{1}{n}S_n (\varphi)-\bar\varphi\right| > \alpha \right)\ge C(\alpha) n^{-\gamma}
\]

Then if  $0<\tau< \frac{1}{1+ \gamma\frac{\beta+1}{\beta}}$ for $\mu$ a.e. $x\in M$
\[
  \lim_{n\to\infty} \max_{0\le m\le n-n^{\tau}} n^{-\tau}S_{n^{\tau}}\circ T^m(x)  \ge \alpha
\]

\end{thm}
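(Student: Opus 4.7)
The plan is to follow the blueprint of the proof of Theorem~\ref{main}: by Remark~\ref{rem:sum}, it suffices to show that for each $\epsilon \in (0,\alpha)$ the series $\sum_n \nu_\Delta(B_n(\epsilon))$ converges, where $B_n(\epsilon) = \bigcap_{m=0}^{n-L_n} F^{-m}A_n(\epsilon)$, $A_n(\epsilon) = \{S_{L_n} \le L_n(\alpha-\epsilon)\}$ and $L_n = n^\tau$. Borel-Cantelli then gives $\max_m S_{L_n}\circ F^m/L_n \ge \alpha - \epsilon$ almost surely, and letting $\epsilon\to 0$ through a countable sequence yields the theorem.

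\textbf{Blocking and approximation.} As in the exponential case, I space the samples by a block length $\kappa_n = n^\sigma$, this time polynomial in $n$ rather than a power of $\log n$; set $r_n = [n/\kappa_n]$ and consider the nested sets $E_n^j(\epsilon) = \bigcap_{i=1}^j F^{-i\kappa_n}A_n(\epsilon)$. Approximate $\mathbf{1}_{A_n(\epsilon)\cap D(\kappa_n)}$ first by a stable-leaf-constant function $g_n^\epsilon$ and then by a $d_{\beta_1}$-Lipschitz function $h_n^\epsilon$ on the quotient tower, using the same buffer-zone construction as in Theorem~\ref{main}.

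\textbf{Polynomial substitutes and balance.} Three ingredients are replaced by polynomial analogues: (i) a polynomial version of Lemma~\ref{lemma_technical}, giving $\nu_\Delta(D(\kappa_n)^c) \le C\kappa_n^{-\rho_1}$ for an explicit $\rho_1 = \rho_1(\beta) > 0$, obtained by repeating the proof with polynomial rather than geometric tail sums; (ii) the polynomial decay-of-correlations estimate for Young Towers with $\bar\nu_\Delta(R>n) \le Cn^{-\beta}$, which asserts
$$\Bigl|\int \phi(\psi\circ F^j)\,d\nu_\Delta - \int\phi\,d\nu_\Delta\int\psi\,d\nu_\Delta\Bigr| \le C\|\phi\|_{\beta_1}\|\psi\|_\infty\,j^{-(\beta-1)}$$
for $\phi$ Lipschitz in $d_{\beta_1}$ and $\psi \in L^\infty$ constant on stable leaves (cf.~\cite{LY99,KKM,Melbourne_Varandas}); (iii) the one-sided polynomial lower bound $\nu_\Delta(A_n(\epsilon/2)) \le 1 - cL_n^{-\gamma}$, extracted from the hypothesis by splitting the two-sided tail into its positive and negative halves and using continuity of $C(\cdot)$ near $\alpha$. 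Iterating the approximated decorrelation $r_n$ times gives
$$\nu_\Delta(E_n^{r_n}(\epsilon)) \le \nu_\Delta(A_n(\epsilon/2))^{r_n} + r_n\Bigl[\nu_\Delta(D(\kappa_n)^c) + C\kappa_n^{-(\beta-1)}\|h_n^\epsilon\|_{\beta_1}\Bigr].$$
The principal term is bounded by $\exp(-cn^{1-\tau\gamma}/\kappa_n)$, which is summable when $\kappa_n$ is polynomially smaller than $n^{1-\tau\gamma}$, while the error terms are summable provided $\sigma$ is large enough. The compatibility of these two windows on $\sigma$ is exactly the condition $\tau < \frac{1}{1+\gamma(\beta+1)/\beta}$, and a final Borel-Cantelli step concludes the proof.

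\textbf{Main obstacle.} The crux is this balance. In the exponential case the correlation factor $\beta_3^{\kappa_n}$ could absorb any polynomial or stretched-exponential cost, so the block length $\kappa_n$ could be as small as a power of $\log n$; here, the polynomial decay $\kappa_n^{-(\beta-1)}$ forces a genuine power of $n$ for $\kappa_n$. In particular, the buffer width defining $h_n^\epsilon$ must survive $L_n = n^\tau$ iterates of the unstable dynamics, producing a $\|h_n^\epsilon\|_{\beta_1}$ that grows as $\beta_1^{-O(n^\tau)}$; keeping $\kappa_n^{-(\beta-1)}\|h_n^\epsilon\|_{\beta_1}$ small while also having $\kappa_n\ll n^{1-\tau\gamma}$ is the delicate point, and it is precisely this competition, together with the sharp value of $\rho_1$ in the polynomial Lemma~\ref{lemma_technical}, that produces the exponent $1+\gamma(\beta+1)/\beta$ in the stated threshold.
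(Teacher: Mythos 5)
Your proposal runs into a genuine and, within the framework you chose, unresolvable obstacle, which you yourself flag in your final paragraph but do not escape. You propose to iterate a correlation estimate for the \emph{tower map} $F$, whose decay is only polynomial, of order $\kappa_n^{-(\beta-1)}$. But the Lipschitz approximant $h_n^\epsilon$ must have a $d_{\beta_1}$-buffer whose separation-time width exceeds $L_n=n^\tau$ (otherwise the Birkhoff sum $S_{L_n}$ is not approximately preserved under the approximation), so $\|h_n^\epsilon\|_{\beta_1}\gtrsim\beta_1^{-n^\tau}$ is exponentially large in a power of $n$. The correlation error per block is therefore at least of size $\kappa_n^{-(\beta-1)}\beta_1^{-n^\tau}$, which diverges for \emph{every} polynomial choice of $\kappa_n$. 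A polynomial decay rate simply cannot absorb an exponential norm factor; the balance you describe in your last paragraph does not exist. This is why your claimed derivation of the exponent $1+\gamma\frac{\beta+1}{\beta}$ from ``this competition'' cannot be made to work: the decorrelation error term is not summable no matter how you tune $\sigma$.

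The paper sidesteps this entirely by never using polynomial decay of correlations for $F$. Instead it projects the event $A_{n^\tau}(\epsilon)$ down to the base $\Lambda$ via $\pi(x,j)=(x,0)$ (after a preparatory $f^{-n_1}$ with $n_1\sim\log n$ to control stable-leaf variation), and intercalates by iterates of the \emph{induced} map $\bar f$ on the quotiented base. Because $\bar f$ is Gibbs--Markov, its decay of correlations in the $d_{\beta_1}$-Lipschitz norm is \emph{exponential}, $\beta_3^{n^{\tau_1}}$, which does absorb $\|h_{n^\tau}(\epsilon)\|_{\beta_1}\le\beta_1^{-n^\tau}$ once $\tau_1>\tau$. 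The price of the projection is that the lower bound $\nu_\Delta(A_{n^\tau}(\epsilon))\gtrsim n^{-\tau\gamma}$ becomes $\bar\nu(\pi A_{n^\tau}(\epsilon))\gtrsim n^{-\tau\gamma-\delta}$ with $\delta>\tau\gamma/\beta$ (the tail $\nu_\Delta(R>n^\delta)$ cost), and the principal term summability condition $1-\tau_1-\gamma\tau-\delta>0$ then gives $\tau<(1+\gamma+\gamma/\beta)^{-1}=(1+\gamma\frac{\beta+1}{\beta})^{-1}$. So the exponent arises from the sum ``gap length $+$ large-deviation rate $+$ projection cost,'' not from a polynomial-decorrelation balance. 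You should also note that your proposed polynomial analogue of Lemma~\ref{lemma_technical} would require $\beta>4$ just to make $\sum_r r^{1-\beta/2}$ converge, and still stronger conditions for $r_n\nu_\Delta(D(\kappa_n)^c)$ to be summable; the paper's proof avoids this by never needing $D(\cdot)$ in the polynomial setting, relying instead on the explicit projection to the base and the Birkhoff averaging of the return time $R$ at the end.
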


\begin{proof}
Let $0<\epsilon \ll \alpha$ and put
\[
A_{n^{\tau}} (\epsilon) =\{ (x,j): \sum_{r=1}^{n^{\tau}} \varphi \circ F^r (x,j) \le \alpha -\epsilon \}.
\]
Since $\varphi$ is Lipschitz continuous  with Lipschitz constant $L$, then if $y\in A_{n^\tau}(\epsilon)$ 
and $d(y,y')<\frac{\epsilon}{2Ln^\tau}$, then $y'\in A_{n^\tau}(\epsilon/2)$. Hence let us choose $n_1$ 
so that $K\beta_1^{n_1}<\frac\epsilon{2Ln^\tau}$ and define
$$
B_{n^{\tau} }(\epsilon) 
=\{ (x,0)\in\Lambda: \exists\:0\le  j< R(f^{n_1}x )~\mbox{ with } (f^{n_1}x,j) \in A_{n^{\tau}} (\epsilon)\}
=f^{-n_1}(\pi A_{n^\tau}(\epsilon)),
$$
where $\pi:\Delta\to\Lambda$ is the projection given by $\pi((x,j))=(x,0)$ ($j<R(x)$).
The choice of the integer achieves that if $(x,0) \in B_{n^{\tau} }(\epsilon)$ and $(x',0)\in W^s (x,0)$ then 
 $(f^{n_1} x' ,0)  \in\pi( A_{n^{\tau}} (\frac{\epsilon}{2}))$. This is a consequence of Assumption (a).
By assumption 
\[
\nu_{\Delta} (A_{n^{\tau}} (\epsilon) )\ge C (\alpha-\epsilon) n^{-\gamma \tau}.
\]
For $\delta > \frac{\tau \gamma}{\beta}$ we have 
 \[
 \nu_{\Delta} ( R> n^{\delta})=o(n^{-\delta})
 \]
 as by assumption $ \nu_{\Delta} ( R> \ell)\le C\ell^{-\beta}$.
 
 Since $\nu_{\Delta} =\bar{\nu}\times \mbox{ (counting measure) }$ we get for $D\subset\Delta$
 $$
 \bar\nu(\pi(D))
 \ge \frac{\nu_\Delta(D)-\nu_\Delta(R>n^\delta)}{n^\delta}.
 $$
 Consequently
 $$
 \bar{\nu} (\pi(A_{n^{\tau}} (\epsilon)))
  \ge \left(C(\alpha-\epsilon) n^{-\tau\gamma}-o(n^{-\delta\beta})\!\right)n^{-\delta}
   $$
   and since $\delta\beta>\tau\gamma$ the first term dominates and we obtain
   $$
    \bar{\nu} (\pi(A_{n^\tau} (\epsilon)))
  \ge c_1n^{-\tau\gamma-\delta}
  $$
  for some $c_1>0$ and since $f^{n_1}$ preserves $\bar{\nu}$,
 $$
 \bar{\nu} (B_{n^{\tau}} (\epsilon))\ge c_1n^{-\tau\gamma-\delta}.
 $$
 
 We can now define 
 $$
\tilde{B}_{n^\tau}(\epsilon)=\bigcup_{x\in B_{n^\tau}(\epsilon)}W^s(x)
$$
which by choice of $n_1$ implies that 
$$
 \tilde{B}_{n^\tau}(\epsilon)\subset B_{n^\tau}(\epsilon/2).
 $$
   We now approximate $1_{B_{n^{\tau}}} (\epsilon)$ by a function $h_{n^{\tau}} (\epsilon)$ 
 which has Lipschitz constant $\beta_1^{-n^\tau}$ in the $d_{\beta_1}$-norm, that is we define
 \[
 h_{n^{\tau}} (\epsilon) (p) = \max (0,1- d(p, B_{n^{\tau}} (\epsilon) \beta_1^{-n^{\tau}})
 \]
 where we write $p$ for $(p,0)$. We can choose $n_1$ to be much smaller than $n^\tau$ 
 and therefore, since by Assumption (b) and (c) if $d(p, B_{n^{\tau}} (\epsilon))<  \beta_1^{\tau_n}$ then
 $d(f^{n_1}p,B_{n^\tau}(\epsilon))<K\beta_1^{n^\tau-n_1}<\frac\epsilon{2Ln^\tau}$ which 
 implies that the support of $h_{n^{\tau}} (\epsilon)$ is contained in $B_{n^{\tau}} (\epsilon/2)$.
 
 Now we let $\tau_1>\tau$ but  $\tau_1-\tau < 1- (\tau\gamma\frac{\beta+1}{\beta} +\tau)$  and consider 
 \[
 G_n(\epsilon)=\bigcap_{m=0}^{[n/n^{\tau_1}]}f^{-mn^{\tau_1}}B_{n^{\tau}} (\epsilon)
 \]
 We will show that 
 \[
 \sum_n \bar{\nu} ( G_n(\epsilon)) <\infty
 \]
 
 Now
\begin{eqnarray*}
 \bar{\nu} (G_n(\epsilon))
 &\le& \bar{\nu} \!\left(\prod_{m=0}^{n^{1-\tau_1}} h_{n^{\tau}} (\epsilon) \circ f^{mn^{\tau_1}}\!\right)\\
& \le &\bar{\nu} (h_{n^{\tau}} (\epsilon)) \bar{\nu}(G_{n-1}(\epsilon))
 +c_3\|h_{n^{\tau}} (\epsilon)\|_{\beta_1}\|  |G_{n-1} (\epsilon) |_{\infty}\beta_3^{n^{\tau_1}}\\
& \le& [\bar{\nu}(h_{n^{\tau} }(\epsilon))]^{n^{1-\tau_1}}+ nC_3\beta_3^{n^{\tau_1}}\beta_1^{-n^{\tau}}
 \end{eqnarray*}
 The term $nC_3\beta_3^{n^{\tau_1}}\beta_1^{-n^{\tau}}$ is summable in $n$ as $\tau_1>\tau$. 	The principal term is estimated by
$$
[\bar{\nu}(h_{n^{\tau} }(\epsilon))]^{n^{1-\tau_1}}
\le \left(1- C(\alpha-\frac{\epsilon}{2}) n^{-\gamma\tau-\delta}\!\right)^{n^{1-\tau_1}}
\le \exp\!\left(-C(\alpha-\epsilon/2)n^{1-\tau_1-\gamma\tau-\delta}\!\right)
$$
Since $\tau_1>\tau$ can be chosen arbitrarily close to $\tau$ and $\delta>\frac{\tau\gamma}\beta$ 
can be chosen to achieve the power $1-\tau-\tau\gamma-\delta$ is positive for any chosen
$\tau<(1+\gamma\frac{\beta+1}\beta)^{-1}$ we obtain that the principal terms are summable which 
implies summability of $\bar\nu(G_n(\epsilon)$.

Now define 
$$
E_n:=\{ (x,0) : \mbox{for all } j< n : \sum_{r=0}^{n^{\tau} }\varphi (F^{R_{n_1}(x)+j +r} x,0)\le (\alpha-\frac{\epsilon}{2})  n^{\tau} \},
$$
where $R_\ell =\sum_{i=0}^{\ell-1}R\circ f^i$ is the $\ell$-th ergodic sum of $R$.
As $E_n (\epsilon) \subset G_n (\epsilon)$, $\bar\nu(G_{n} (\epsilon))$  summable implies  that 
$\sum_{n=1}^{\infty} \bar{\nu} (E_n (\epsilon))<\infty$.

By Birkhoff's ergodic theorem
\[
\lim_{n\to \infty} \frac{R_n (x,j)}{n}=\bar{R}=\frac1{\nu_{\Delta} (\Lambda)}
\]
for $\nu_{\Delta}$ a.e. $(x,j)\in \Delta$, and so the theorem follows.

\end{proof}

\begin{examp}\label{example1}
The condition  $\tau <  \frac{1}{1+ \gamma\frac{\beta+1}{\beta}}$ is close to  optimal in that, taking $\gamma=\beta$,  we require
$\tau <\frac{1}{2+\beta}$. We may construct a Young Tower and observable $\varphi$, $\int_{\Delta} \varphi \,d\nu_{\Delta}=0$ and $\alpha>0$ such that  
$\nu_{\Delta} (S_{n_{\tau} } \varphi (x,j) \ge n^{\tau} \alpha) \le C n^{-\tau \beta}$, yet for all $\tau >\frac{1}{\beta+1}$,
\[
  \lim_{n\to\infty} \max_{0\le m\le n-n^{\tau}} n^{-\tau}S_{n^{\tau}}\circ T^m =0
\]

 We sketch the main idea of the tower and observable  and make a couple of technical adjustments to ensure the tower is mixing and that the 
 observable is not a coboundary.  The construction is based on that of~\cite{Bryc}.
The base  partition consists of disjoint 
intervals $\Lambda_i$ of length $i^{-\beta-2}$ and height $2i$. Above the base element $\Lambda_i$ the levels of the tower consist of 
$\{ (x,j): 0\le j \le 2i-1\}$. We define $\varphi$ on the Tower by, if $x\in \Lambda_i$,
\[ \varphi (x,j) = \left\{ \begin{array}{ll}
         -1 & \mbox{if $0\le j < i$};\\
        1 & \mbox{if $i \le  j  < 2i$}.\end{array} \right. \] 
Clearly $\nu_{\Delta} (\varphi)=0$.

Let $0< \alpha< 1$. Note that $S_{n_{\tau} } \varphi (x,j) \ge n^{\tau} \alpha$ only if $(x,j) \in (R>n^{\tau})$ and in fact $\nu_{\Delta} (S_{n_{\tau} } \varphi (x,j) \ge n^{\tau} \alpha)
\ge C \nu_{\Delta} (R>2n^{\tau} )=\sum_{r=2
n^{\tau}}^{\infty} (2j)j^{-2-\beta}\le C n^{-\tau \beta}$.

However if $\tau>\frac{1}{\beta+1}$ then $\sum_{j\ge n^{\tau}} \bar{\nu} (\Lambda_j)\le \sum_{n=1}^{\infty} n^{-\tau(\beta+1)}<\infty$. Hence by the Borel-Cantelli lemma
$f^n (x, 0)\in \bigcup_{j>n^{\tau} } \Lambda_j$ only finitely many times for $\bar{\nu}$ a.e. $(x,0)$. This implies that for $\bar{\nu}$ a.e. $(x,0)$ there exists an $N(x)$ such that for
all $n \ge N(x)$
$$
 \mbox{for all  } j< n : \sum_{r=0}^{n^{\tau}} \varphi (f^{j+r} x,0)< \alpha  n^{\tau}. 
$$
Hence for  $\mu$ a.e. $x\in M$
\[
  \lim_{n\to\infty} \max_{0\le m\le n-n^{\tau}} n^{-\tau}S_{n^{\tau}}\circ T^m < \alpha
\]
for every $\alpha>0$.

The same argument shows for   $\nu_{\Delta}$ a.e. $(x,j)$
\[
  \lim_{n\to\infty} \max_{0\le m\le n-n^{\tau}} n^{-\tau}S_{n^{\tau}}\circ T^m =0
\]
and
\[
 \lim_{n\to\infty} \min_{0\le m\le n-n^{\tau}} n^{-\tau}S_{n^{\tau}}\circ T^m =0
\]

The heights of the levels in the tower above are all multiples of $2$. 
Furthermore the observable  $\varphi$ is a coboundary. If we define
\[
\psi  (x, j) = \begin{cases}j
 & \mbox{ if }  x\in \Lambda_{k},0\le  j \le k\\
2k-j &\mbox{ if } x\in \Lambda_{k}, k < j \le 2k-1\end{cases}.
\]

It is easy to check that
 \[
\varphi=\psi\circ F -\psi
\]

We will modify the tower and the observable so that the greatest common denominator of the
return time function $R$ is $1$ (to ensure the tower is mixing) and that the new observable is not a coboundary. We change  $\Lambda_3$ to have height $3$. This entails that the 
tower is mixing. On  the levels above $\Lambda_3$ we modify $\varphi$ to $\varphi_1$
so that $\varphi_1 (x,j)=\kappa>0$, $j=0,1,2$, $x\in \Lambda_3$ where $\kappa>0$ is small but $\varphi_1=\varphi$ elsewhere .  This entails $r_1:=\nu_{\Delta} (\varphi_1)=\kappa \nu_{\Delta}(\Lambda_3) >0$. We subtract $r_1/(\nu_{\Delta}(\Lambda_2))$
from the value of $\varphi_1$ on $\Lambda_2$ to form a new observable $\varphi_2$ such that $\nu_{\Delta} (\varphi_2)=0$.
 Since $F^3$ has a fixed point $p$ on $\Lambda_3$ and since $\sum_{j=0}^2 \varphi_2(x,j)\not = 0$ we conclude $\varphi_2$ is not a coboundary (by the Li\v{v}sic theorem~\cite{Nicol_Scott}).  The new tower with observable
 $\varphi_2$ we defined 
 has  the properties of the former pertinent to our example.

\end{examp}

\end{document}